\DeclareMathAlphabet{\mathcal}{OMS}{cmsy}{m}{n}
\newtheorem{theorem}{Theorem}[section]
\newtheorem{lem}[theorem]{Lemma}
\newtheorem{prop}[theorem]{Proposition}
\theoremstyle{definition}
\newtheorem{definition}[theorem]{Definition}
\numberwithin{equation}{theorem}
\def\phi{\varphi}
\def\to{\longrightarrow}
\def\dim{\operatorname{dim}}
\def\codim{\operatorname{codim}}
\def\rank{\operatorname{rank}}
\def\Hom{\operatorname{Hom}}
\def\Ext{\operatorname{Ext}}
\def\Proj{\operatorname{Proj}}
\def\Sing{\operatorname{Sing}}
\newcommand{\GL}{\mathrm{GL}}
\newcommand{\mf}[1]{\mathfrak{#1}}
\begin{document}
\title{Lengths of Local Cohomology of Thickenings}

\author{Jennifer Kenkel}
\address{Department of Mathematics, University of Kentucky, 719 Patterson Office Tower, Lexington, KY~40507, USA}
\email{jennifer.kenkel@gmail.com}

\thanks{The author was supported by NSF grant DMS~1246989. I thank Claudiu Raicu for his patient explanation at the MSRI, and Anurag Singh for his many helpful discussions.  }

\begin{abstract}
Let $R$ be a standard graded polynomial ring that is finitely generated over a field of characteristic $0$, let $\mf{m}$ be the homogeneous maximal ideal of $R$, and let $I$ be a homogeneous prime ideal of $R$. Dao and Monta\~{n}o defined an invariant that, in the case that $\Proj(R/I)$ is lci and  for cohomological index less than $\dim(R/I)$, measures the asymptotic growth of lengths of local cohomology modules of thickenings. 
They showed its existence and rationality for certain classes of monomial ideals $I$. 

The following affirms that the invariant exists and is rational for rings $R = \mathbb{C}[X]$ where $X$ is a $2 \times m$ matrix and $I$ is the ideal generated by size two minors and is to our knowledge, the first non-monomial calculation of this invariant.  
\end{abstract}
\maketitle

\section{Introduction}
\label{section:introduction}
Let $R$ be a standard graded polynomial ring that is finitely generated over a field, let $\mf{m}$ be the homogeneous maximal ideal of $R$, and let $I$ be a homogeneous prime ideal of $R$. Let $d$ be the dimension of $R$.

 The invariant \emph{multiplicity} (see \cite[Definition 4.1.5]{brunsAndHerzog}) is defined for $\mf{m}$-primary ideals as
$$ e(I) \coloneqq d! \lim\limits_{t \to \infty} \frac{\ell(R/I^t)}{t^{d}}.$$

In \cite{Katz},  the authors defined a generalization of this invariant, with which $e(I)$ agrees when $I$ is $\mf{m}$-primary, 
$$\epsilon^0(I) \coloneqq d! \lim \frac{\ell \left( H^0_{\mf{m}}(R/I^t) \right) }{t^d}.$$
They showed this invariant gives information about the analytic spread of $I$. 

Dao and Monta\~{n}o defined a further generalization of multiplicity in \cite{DaoAndMontano}:   
$$ \epsilon^j(I) := \lim_{t \to \infty} \dfrac{\ell \left( H^j_{\mathfrak{m}} (R/I^t) \right) }{t^{d}}.$$  

\par 
This numerical invariant also builds on ideas of Bhatt, Blickle, Lyubeznik, Singh, and Zhang.  
They showed, in \cite[Theorem 1.1]{BBLSZ}, that when $R/I$ is a complete intersection on the punctured spectrum, then for $k < \dim\Sing(R/I)$, the induced maps on graded components, $H^k_{\mf{m}}(R/I^t)_d \to H^k_{\mf{m}}(R/I^{t-1})_d$, are isomorphisms for $t$ sufficiently large. 
The invariant $\epsilon^j$, on the other hand, measures the behavior of local cohomology modules in every graded component at once. 
Dao and Monta\~{n}o related $\epsilon^j$ to depth conditions on the Rees algebra, and computed $\epsilon^j(I)$ for monomial ideals. Furthermore, in \cite[Corollary 5.4]{DaoAndMontano},
they showed that, when $R=\mathbb{C}[x_{i,j}]$ a polynomial ring in $pq$ variables, and $I$ a $\GL$-invariant ideal that is a thickening of a determinantal ideal, then for every $i \leq p+q-2$ 
$$\limsup_{t \to \infty} \dfrac{\ell \left( H^j_{\mathfrak{m}} (R/I^t) \right) }{t^{d}} < \infty.$$ 
They also show in \cite[Theorem 6.4]{DaoAndMontano}, that in the case that $\Proj(R/I)$ is lci, for every $i< \codim(\Sing(R/I)))$ such that $H^{d-i}_I(R) \neq 0$, we have 
$$\liminf_{t \to \infty} \dfrac{\ell \left( H^j_{\mathfrak{m}} (R/I^t) \right) }{t^{d}} > 0.$$
A natural route for study, then, is to determine whether the invariant $\epsilon^j(I)$ exists.
The following affirms the limit $\epsilon^j(I)$ does exist in the particular family $R=\mathbb{C}[X]$, where $X$ a $2 \times m$ matrix of variables, and $I$ the ideal generated by all size two minors. To our knowledge, this is the first calculation of $\epsilon(I)$ where $I$ is not a monomial ideal. 

New veins of inquiry in commutative algebra are often first mined in the family of determinantal rings, as they offer enough algebraic complexity to be nontrivial, but can be appraised through techniques from other fields of study (such as representation theory, which this paper uses liberally) \cite{EagonNorthcott, HochsterAndEagon, DEP}
. For a detailed study of determinantal rings, see \cite{DetRings}.

\par Let $X = ( x_{ij} )$ be a $2 \times m$ matrix of indeterminates, with $2<m$, and let $R = \mathbb{F}[X]$, where $\mathbb{F}$ is a field of characteristic zero.  The goal of this article is to determine the length, $\ell(H^j_{\mf{m}}(R/I^t, R))$. 
Following the notation of Raicu, Weyman, and Witt, this paper will refer to the number of rows of $X$ as $n$ and the number of columns as $m$. 
However, differing from their notation, we will use $t$ (rather than $d$) to denote the exponent of an ideal. When repeating others results, we will use $D$ (rather than $p$) to denote the size of the minors, but in this paper, we will always specialize to $D$ equal to two.

Our main results are
\begin{restatable}{theorem}{myThm} For $R= \mathbb{C}[X]$ where $X$ a $2 \times m$ matrix, and $I$ the ideal generated by size two minors, then 
	$$ \ell(H^3_{\mf{m}} (R/I^t)) = \frac{1}{m+1}{m + t -2 \choose m}{m + t - 1 \choose m}.$$ 
\end{restatable}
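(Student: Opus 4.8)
\medskip
\noindent\textbf{Proof idea.}
Write $V=\CC^2$, $W=\CC^m$, so that $R=\Sym(V\otimes W)$ carries a $\GL(V)\times\GL(W)$-action and, by Cauchy's formula, $R=\bigoplus_{\lambda}\mathbb{S}_{\lambda}V\otimes\mathbb{S}_{\lambda}W$ over partitions $\lambda$ of length $\le 2$. The first thing I would record is that the $2\times 2$ minors of the $2\times m$ matrix $X$ are its \emph{maximal} minors, so that $I^t=I^{(t)}=\bigoplus_{\lambda_2\ge t}\mathbb{S}_{\lambda}V\otimes\mathbb{S}_{\lambda}W$ and hence
\[
R/I^t=\bigoplus_{k=0}^{t-1}\ \bigoplus_{a\ge k}\mathbb{S}_{(a,k)}V\otimes\mathbb{S}_{(a,k)}W
\]
both as a $\GL(V)\times\GL(W)$-module and as a graded $R$-module. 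Here $\Proj(R/I)=Y$ is the Segre variety $\PP^1\times\PP^{m-1}$, of dimension $m$, and its affine cone is smooth away from the vertex; consequently $I$ is a local complete intersection away from $\mf m$ and $R/I^t$ is locally Cohen--Macaulay there, so $\Ext^i_R(R/I^t,R)$ has finite length for $i\neq\codim I=m-1$. In particular $H^3_{\mf m}(R/I^t)$ is finite length whenever $m>2$, so the stated quantity makes sense; the plan is to compute it using the characteristic-zero representation theory of $\GL(V)\times\GL(W)$.

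\medskip
Next I would pass to sheaf cohomology on $Y$. For $j\ge 2$ one has $H^j_{\mf m}(R/I^t)\cong\bigoplus_{n\in\ZZ}H^{j-1}\big(\PP^{2m-1},\widetilde{R/I^t}(n)\big)$; equivalently, by graded local duality, $\ell\big(H^3_{\mf m}(R/I^t)\big)=\ell\big(\Ext^{2m-3}_R(R/I^t,R)\big)$, and this $\Ext$ module can be obtained by the geometric technique applied to the standard desingularization of the determinantal variety, in the style of Raicu--Weyman--Witt. I would filter $\widetilde{R/I^t}$ by the $I$-adic filtration, whose graded pieces are the coherent $\calO_Y$-modules $\calM^{(k)}:=\widetilde{I^k/I^{k+1}}$, $0\le k\le t-1$. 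Because the Rees algebra of the maximal-minors ideal is Cohen--Macaulay and $Y$ is smooth, each $\calM^{(k)}$ should be a $\GL(V)\times\GL(W)$-equivariant bundle of the form $\calO_{\PP^1}(-2k)\boxtimes\calE_k$, up to a one-dimensional twist by $(\wedge^2V)^{\otimes k}$, where $\calE_k$ is the equivariant bundle on $\PP^{m-1}$ whose twists have global sections $\mathbb{S}_{(a,k)}W$ --- a Schur functor of the tautological bundle. Then the K\"unneth formula together with Bott's theorem on $\PP^1$ and on $\PP^{m-1}$ computes every $H^q\big(Y,\calM^{(k)}(n)\big)$ as either zero or a single irreducible $\mathbb{S}_{\mu}V\otimes\mathbb{S}_{\nu}W$, and I would pin down precisely the triples $(k,n,q)$ giving a nonzero group. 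A weight analysis should then force every connecting map in the long exact sequences of the filtration to vanish, yielding $H^3_{\mf m}(R/I^t)\cong\bigoplus_{k=0}^{t-1}\bigoplus_{n\in\ZZ}H^2\big(Y,\calM^{(k)}(n)\big)$.

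\medskip
The last step is to add up dimensions. Each $\dim H^2\big(Y,\calM^{(k)}(n)\big)$ is a product of a dimension of a two-row Schur module of $\CC^2$ --- a linear factor in the indices --- with a dimension of a two-row Schur module of $\CC^m$ --- a product of binomial coefficients --- and is nonzero only for $n$ in an explicit range depending on $k$ and $m$. The sum over $n$ should telescope, and summing the result over $k=0,\dots,t-1$ and simplifying ought to produce $\frac{1}{m+1}\binom{m+t-2}{m}\binom{m+t-1}{m}$; I would also sanity-check this at small $t$ (it gives $0$ at $t=1$, $1$ at $t=2$, and $\binom{m+2}{2}$ at $t=3$), and note that the final identity can be confirmed by recognizing the right-hand side as a $2\times 2$ Jacobi--Trudi determinant of binomials, or by induction on $t$ via $0\to I^t/I^{t+1}\to R/I^{t+1}\to R/I^t\to 0$.

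\medskip
I expect the crux to be the middle step: identifying the equivariant bundles $\calE_k$ exactly --- equivalently, understanding $I^k/I^{k+1}$ as a graded module over the determinantal ring --- and then running Bott's theorem while bookkeeping exactly which twists $n$ and which $k$ contribute a nonzero $H^2$, and verifying that the $H^1$ and $H^3$ groups that could feed the connecting maps actually vanish. The concluding binomial identity should be fiddly but routine. The characteristic-zero hypothesis enters essentially, through the Cauchy, Pieri, and Bott theorems.
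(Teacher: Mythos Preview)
Your outline is sound and in its skeleton mirrors the paper: both filter $R/I^t$ by the $I$-adic filtration (the paper's factors $J_{(z,z),1}$ for $z=0,\dots,t-1$ are precisely $I^z/I^{z+1}$), both argue that the filtration is Ext-split by a $\GL$-weight argument, both decompose each factor into Schur modules $\mathbb{S}_{\lambda}\CC^2\otimes\mathbb{S}_{\mu}\CC^m$, and both finish with the same double binomial sum and the identity $\sum_{t}\frac{1}{m-1}\binom{m+t-3}{m-2}\bigl(\binom{m+t-1}{m+1}+\binom{m+t-2}{m+1}\bigr)=\frac{1}{m+1}\binom{m+T-2}{m}\binom{m+T-1}{m}$.

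The genuine difference is in how the per-factor computation is carried out. The paper imports two black boxes from Raicu and Raicu--Weyman--Witt: the general Ext-split filtration theorem for $\GL$-invariant ideals and the closed formula for $\Ext^j_R(J_{\underline{z},l},R)$ in terms of the weight sets $W(\underline{z},l;\underline{t},s)$; it then reduces to checking that only $l=1$, $t_1=1$, $s=0$ survive, reads off $\lambda_2=1-z-m$ and $1-z-m\le\lambda_1\le -m$, and evaluates the Schur dimensions. You instead propose to redo that computation in this special case by hand: identify $\widetilde{I^k/I^{k+1}}$ as an explicit equivariant bundle on $Y=\PP^1\times\PP^{m-1}$ and apply K\"unneth plus Bott to extract the contributing $(k,n)$. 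Your route is more geometric and self-contained --- one never has to parse the index sets $\mathcal Z(\mathcal X)$ or $W(\underline z,l;\underline t,s)$ --- but it front-loads the work of pinning down $\calE_k$ precisely (equivalently, identifying the conormal module and its symmetric powers as equivariant sheaves) and of bookkeeping the Bott shifts, which is exactly what the cited machinery packages. Both routes land on the same list of irreducibles and the same binomial sum; your sanity checks at $t=1,2,3$ are correct.
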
  
And its corollary, 
\begin{restatable}{corollary}{myCor} With the above conditions, 
	$$\epsilon^3(I) = \lim\limits_{t\to\infty} \frac{\ell(H^3_{\mf{m}} R/I^t)}{t^{d}} = \frac{1}{(m+1)(m!)(m!)} $$ 
	and 
	\begin{align*}
	d! \epsilon^3 (I) &=  
	d!  \lim\limits_{t\to\infty} \frac{\ell(H^3_{\mf{m}} R/I^t)}{t^{d}} \\ 
	&=\frac{(2m)!}{(m+1)(m!)(m!)} \\
	&= \frac{1}{m+1}{2m \choose m}
	\end{align*} 
	Interestingly, $	d! \epsilon^3(I)$ is the $m^{th}$ Catalan number. 
\end{restatable}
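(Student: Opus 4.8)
The Corollary is a purely asymptotic consequence of the exact length formula in the Theorem, which I am entitled to assume. The plan is to extract the leading-order behavior of the product of two binomial coefficients as $t\to\infty$, with $m$ fixed, and then simply rewrite the resulting rational constant in the three displayed forms.

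First I would recall that for fixed $m$ and $t\to\infty$ one has $\binom{m+t-2}{m}=\frac{(t+m-2)(t+m-3)\cdots(t-1)}{m!}\sim \frac{t^m}{m!}$ and likewise $\binom{m+t-1}{m}\sim\frac{t^m}{m!}$, since each is a monic (up to the $\frac1{m!}$ factor) polynomial in $t$ of degree exactly $m$. Hence the product $\frac{1}{m+1}\binom{m+t-2}{m}\binom{m+t-1}{m}$ is a polynomial in $t$ of degree $2m$ with leading coefficient $\frac{1}{(m+1)(m!)^2}$. Since $d=\dim R=2m$ (the polynomial ring $\mathbb C[X]$ on a $2\times m$ matrix has $2m$ variables), dividing by $t^d=t^{2m}$ and passing to the limit kills all lower-order terms and leaves exactly
$$\epsilon^3(I)=\lim_{t\to\infty}\frac{\ell(H^3_{\mf m}(R/I^t))}{t^{2m}}=\frac{1}{(m+1)(m!)(m!)},$$
which is the first assertion. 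In particular the limit exists and is rational, as promised in the introduction.

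Next I would multiply through by $d!=(2m)!$ to obtain $d!\,\epsilon^3(I)=\frac{(2m)!}{(m+1)(m!)(m!)}$. Then I would observe the algebraic identity $\frac{(2m)!}{(m!)(m!)}=\binom{2m}{m}$, so that $d!\,\epsilon^3(I)=\frac{1}{m+1}\binom{2m}{m}$, which is precisely the standard closed form for the $m^{\text{th}}$ Catalan number $C_m$; I would cite this well-known fact (e.g.\ via the formula $C_m=\frac{1}{m+1}\binom{2m}{m}$) to justify the final sentence. This completes all three displayed equalities.

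There is essentially no serious obstacle here: the only things to be careful about are (i) confirming that $d=2m$ so that the normalization $t^d$ matches the degree $2m$ of the length polynomial — this is where the ``$\frac1{m+1}$'' survives rather than being scaled away — and (ii) making sure the two binomial factors each contribute degree exactly $m$ (they do, since their top coefficients in $t$ are nonzero). Both are immediate, so the proof is a short computation once the Theorem is in hand.
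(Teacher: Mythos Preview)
Your proposal is correct and matches the paper's approach: the paper does not supply a separate proof for the Corollary at all, treating it as an immediate asymptotic consequence of the exact length formula in the Theorem, which is precisely the computation you carry out. Your care in checking $d=\dim R=2m$ and that each binomial factor contributes leading term $t^m/m!$ fills in exactly the details the paper leaves implicit.
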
 

The Catalan numbers are perhaps the most well-studied sequence in combinatorics. The reason for this association is still unclear, but hints at an elegant interpretation of $\epsilon^k(I)$.

 \section{Justification of Cohomological Index}  \label{finiteLengthsSection}
\par  In later sections, we will calculate the lengths of all local cohomology modules, $H^3_{\mf{m}}(R/I^t)$ whenever $R$ is a field adjoin a $2 \times m$ matrix of variables and $I$ is the ideal generated by size two minors. 
In this section, we show that the choice of cohomological index three is not arbitrary, but in fact the only case in which $\epsilon^j(I)$ exists and is nontrivial. We show that $H^j_{\mf{m}}(R/I^t)$ is zero at all indices other than $j=3$ or $j=\dim(R/I^t)$, and recall that the lengths of top local cohomology modules are infinite. 
\par We note that we could calculate the lengths of local cohomology by calculating instead the lengths of related $\Ext$-modules. 
The ring $R = \mathbb{F}[X]$ is regular, and so by graded duality, 
$$ H^j_{\mf{m}}(R/I^t)) \cong \Ext_R^{mn-j}(R/I^t, R(-a))^{\vee}.$$
Thus the length, $\ell(H^j_{\mf{m}}(R/I^t))$, is equal to $\ell(\Ext_R^{mn-j}(R/I^t, R))$.
\par Making use of graded duality, we will show all other local cohomology modules are zero by showing that, for all $t$, the modules $\Ext^i_R(R/I^t, R)$, are $0$  whenever $i$ differs from $2m-3$ or $m-1$.  In order to do this, we will show that the maps $\Ext^i_R(R/I^t, R)$ to $\Ext^i_R(R/I^{t+1}, R)$ are injective, and then we will recall a result that the direct limits $\lim\limits_{t\to \infty}\Ext^i_R(R/I^t, R)$ are 0 for all $i \neq 2m-3$ or $m-1$. 
The following result is adapted from \cite[Equation 4.8]{RWW}. 
\begin{lem} \label{extInjLemma} In the case that $R=\mathbb{F}[X]$ with $\mathbb{F}$ a field of characteristic 0, and $I$ the ideal generated by maximal minors, i.e., $d=n$, the natural maps from $\Ext^j_R(R/I^t, R)$ to $\Ext^j_R(R/I^{t+1}, R)$ are injective for all $t$ and all $j$. 
\end{lem}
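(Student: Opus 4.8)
The plan is to leverage the $G$-equivariance of everything in sight, where $G=\GL_n(\FF)\times\GL_m(\FF)$ acts on $R=\FF[X]$. Since $I$ is $G$-stable, so is each $I^t$, hence $R/I^t$, the natural surjections $R/I^{t+1}\onto R/I^t$, and the induced maps $\Ext^j_R(R/I^t,R)\to\Ext^j_R(R/I^{t+1},R)$ (which assemble into the direct system computing $H^j_I(R)$). Consequently each $\Ext^j_R(R/I^t,R)$ decomposes as a direct sum of irreducible rational representations $\mathbb{S}_\alpha(\FF^n)\otimes\mathbb{S}_\beta(\FF^m)$, each with finite multiplicity, and injectivity of a $G$-equivariant map can be tested one isotypic component at a time. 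The Schur content of these $\Ext$ modules is accessible because the minimal free resolution of $R/I^t$ over $R$ is, for maximal minors, $G$-equivariant and explicitly known (the Eagon--Northcott / Buchsbaum--Rim family and its extensions to powers, used throughout \cite{RWW}); dualizing into $R=\Sym(\FF^n\otimes\FF^m)=\bigoplus_\lambda\mathbb{S}_\lambda(\FF^n)\otimes\mathbb{S}_\lambda(\FF^m)$ expresses $\Ext^j_R(R/I^t,R)$ as the cohomology of an explicit $G$-equivariant complex, and the chain maps lifting $R/I^{t+1}\onto R/I^t$ induce the transition maps. This is precisely the bookkeeping of \cite{RWW}, whose Equation~4.8 records both the Schur content of $\Ext^j_R(R/I^t,R)$ and how it, and the comparison maps, evolve with $t$.

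The cleanest way to organize the argument is through the long exact sequence obtained by applying $\Hom_R(-,R)$ to $0\to I^t/I^{t+1}\to R/I^{t+1}\to R/I^t\to 0$: the map in question is injective precisely when the connecting homomorphism $\Ext^{j-1}_R(I^t/I^{t+1},R)\to\Ext^j_R(R/I^t,R)$ vanishes, so the entire lemma reduces to the vanishing of all of these connecting maps (equivalently, to the short exact sequences $0\to\Ext^j_R(R/I^t,R)\to\Ext^j_R(R/I^{t+1},R)\to\Ext^j_R(I^t/I^{t+1},R)\to 0$). Here one uses the classical $\GL$-decomposition $I^t=\bigoplus_{\lambda_n\ge t}\mathbb{S}_\lambda(\FF^n)\otimes\mathbb{S}_\lambda(\FF^m)$, so that $R/I^t$ involves only partitions with $\lambda_n<t$ while $I^t/I^{t+1}$ involves only those with $\lambda_n=t$; the disjointness of these ranges, transported through the equivariant duality, is what forces the connecting maps to be zero.

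The step I expect to be the genuine obstacle is precisely this vanishing: checking that the $G$-equivariant transition maps (equivalently, connecting homomorphisms) never annihilate an irreducible summand, despite a given Schur functor possibly occurring on both sides. This is the content of \cite[Equation 4.8]{RWW} in the maximal-minors case, and the work on our end is to confirm that their analysis applies in the present normalization --- in particular over an arbitrary field of characteristic zero, by flat base change from $\QQ$ or $\CC$ --- after which injectivity for all $t$ and all $j$ (and, more, the short exact sequences above) follows immediately.
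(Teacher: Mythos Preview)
Your plan and the paper's proof both rest on \cite[Equation~4.8]{RWW}, but they organize the reduction differently. You apply $\Hom_R(-,R)$ to $0\to I^t/I^{t+1}\to R/I^{t+1}\to R/I^t\to 0$ and aim to kill the connecting maps; the paper instead applies it to $0\to I^t\to R\to R/I^t\to 0$, uses $\Ext^j_R(R,R)=0$ for $j\neq 0$ to get isomorphisms $\Ext^j_R(I^t,R)\cong\Ext^{j+1}_R(R/I^t,R)$ for $j\ge 1$, and then simply transports the injectivity of $\Ext^j_R(I^t,R)\to\Ext^j_R(I^{t+1},R)$ (which is exactly what \cite[Equation~4.8]{RWW} states) across that isomorphism. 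The low degrees $j=0,1$ are handled separately by a four-lemma/diagram chase. The paper's route is shorter because it quotes RWW for precisely the statement RWW proves, rather than for an equivalent reformulation.

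One genuine gap in your write-up: the disjointness heuristic (``$R/I^t$ involves only $\lambda_n<t$ while $I^t/I^{t+1}$ involves only $\lambda_n=t$, hence the connecting maps vanish'') does not stand on its own. The Schur content of $\Ext^j_R(M,R)$ is governed by the equivariant resolution of $M$, not by the Schur content of $M$ itself, so disjointness at the level of $M$ does not propagate to disjointness at the level of $\Ext$; indeed a given $\mathbb{S}_\alpha\otimes\mathbb{S}_\beta$ can and does appear in $\Ext^{j-1}_R(I^t/I^{t+1},R)$ and $\Ext^j_R(R/I^t,R)$ simultaneously. You seem aware of this, since you ultimately defer to \cite[Equation~4.8]{RWW}; but note that RWW's statement is about the transition maps on $\Ext^j_R(I^t,R)$, not about your connecting maps, so to close your argument you would still need the identification $\Ext^j_R(I^t,R)\cong\Ext^{j+1}_R(R/I^t,R)$ --- i.e., the paper's short exact sequence --- to translate one into the other. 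At that point the representation-theoretic scaffolding is no longer doing any work, and the paper's direct argument is the efficient one.
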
  
\begin{proof} 
	The successive maps 
	$$ \Ext^j_R(I^t, R) \to \Ext^j_R(I^{t+1}, R)$$
	are injective for all $t$ and all $j$ by \cite[Equation 4.8]{RWW}. 
	The short exact sequence 
	$$ 0 \to I^{t} \to R \to R/I^{t} \to 0 $$ 
	induces the long exact sequence 
	$$ \cdots \to \Ext^{j}_R(R, R) \to \Ext^j_R(I^t, R) \to \Ext^{j+1}_R(R/I^t, R) \to \Ext^{j+1}(R, R) \to \cdots .$$ 
	Since $\Ext^j_R(R, R) = 0$ for all $j \neq 0$,  the above long exact sequence gives $$\Ext^{j}_R(I^t, R) \cong \Ext^{j+1}_R(R/I^t, R)$$ for all $j \geq 1$. As  $\Ext^0_R(R/I^t, R) = \Hom(R/I^t, R)$ and $\Hom(R/I^t, R)$ is 0, we have the maps from  $\Ext^0_R(R/I^{t}, R)$ to $\Ext^0_R(R/I^{t+1},R)$ are injective. 
	It remains to show that the natural maps from $\Ext^1_R(R/I^t, R)$ to $\Ext^1_R(R/I^t, R)$ are injective. 
	\par  As $\Ext^0(I^t, R) = \Hom(I^t, R)$, and as $I^{t+1}$ is a subset of $I^t$, the natural maps from $\Ext^0_R(I^t, R)$ to  $\Ext^0(I^{t+1}, R)$ are restriction maps, and thus are injective. We have the following diagram in which the rows are exact: 
	\\
	\begin{center} 
		\begin{tikzcd}
		&\Ext^{0}_R(R,R) \arrow[r] \arrow[d,equal] &\Ext^0_R(I^t, R) \arrow[r] \arrow[d,hookrightarrow] & \Ext_R^1(R/I^t, R) \arrow[r] \arrow[d] &0 \arrow[d,equal] \\
		&\Ext^{0}_R(R,R) \arrow[r] &\Ext^0_R(I^{t+1}, R) \arrow[r] & \Ext^1_R(R/I^{t+1}, R) \arrow[r] &0 \\
		\end{tikzcd}
	\end{center} 	
	From the diagram, the maps from $\Ext^1_R(R/I^t, R)$ to $\Ext^1_R(R/I^{t+1}, R)$ are injective by an application of the four lemma, or a straightforward diagram chase. Thus we have that the natural maps from $\Ext^j_R(R/I^t, R)$ to $\Ext^j_R(R/I^{t+1}, R)$ are injective for all $j$ and for all $t$. 
\end{proof} 

We will now use Lemma~\ref{extInjLemma} to show that the modules of interest, $\Ext^j_R(R/I^t, R)$, vanish except at two cohomological indices. We will also show that, at those two cohomological indices, the modules $\Ext^j_R(R/I^t, R)$ do not vanish for some $t$ sufficiently large. 
\par The next proposition follows from the above lemma and \cite[Theorem V.10, part~(b)]{Witt}
\begin{prop} \cite{Witt, RWW} If $X$ is a $2 \times m$ matrix and $I$ is the ideal generated by all size two minors, then for all $t$, the modules $\Ext^i_R(R/I^t, R)$ are 0  whenever $i$ differs from $2m-3$ or $m-1$. For $t \gg 0$, the modules $\Ext^i_R(R/I^t, R)$ are not 0 when $i = 2m-3 \text{ or } m-1 $.  
\end{prop}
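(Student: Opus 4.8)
The plan is to pass to the direct limit. Recall the standard identification $\dlim_t \Ext^i_R(R/I^t, R) \cong H^i_I(R)$, in which the transition maps are exactly the natural maps appearing in Lemma~\ref{extInjLemma} (the powers $I^t$ form a cofinal subsystem among ideals with radical $I$). By Lemma~\ref{extInjLemma} each of these transition maps is injective, so for every $t$ and every $i$ the canonical map $\Ext^i_R(R/I^t, R) \to H^i_I(R)$ is injective as well; that is, each $\Ext^i_R(R/I^t,R)$ is isomorphic to a submodule of $H^i_I(R)$, and $H^i_I(R)$ is the increasing union of these submodules.

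The substantive input is the computation of $H^i_I(R)$ for $I$ the ideal of maximal minors. Since $X$ is $2\times m$, the ideal $I$ of size two minors is the ideal of maximal minors, $R/I$ is the homogeneous coordinate ring of the Segre embedding $\PP^1 \times \PP^{m-1} \hookrightarrow \PP^{2m-1}$, so $R/I$ has an isolated singularity at $\frakm$, $\height(I) = m-1$, and $\dim R/I = m+1$. By \cite[Theorem V.10, part~(b)]{Witt} (see also \cite{RWW}), $H^i_I(R) = 0$ for all $i \notin \{\,m-1,\ 2m-3\,\}$, while $H^{m-1}_I(R) \neq 0$ and $H^{2m-3}_I(R) \neq 0$; these two indices are distinct precisely because $m > 2$.

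Combining the two inputs finishes the argument. For $i \notin \{m-1, 2m-3\}$ we have $H^i_I(R) = 0$, so the injection $\Ext^i_R(R/I^t, R) \hookrightarrow H^i_I(R)$ gives $\Ext^i_R(R/I^t, R) = 0$ for every $t$. For $i \in \{m-1, 2m-3\}$, since $H^i_I(R)$ is a nonzero increasing union of the submodules $\Ext^i_R(R/I^t, R)$, some $\Ext^i_R(R/I^{t_i}, R)$ is nonzero, and then injectivity of the transition maps forces $\Ext^i_R(R/I^t, R) \neq 0$ for all $t \geq t_i$; taking $t \geq \max(t_{m-1}, t_{2m-3})$ handles both indices at once, which is the meaning of $t \gg 0$. (Via graded duality this also recovers that $H^j_{\frakm}(R/I^t)$ vanishes except at $j=3$ and the top index $j=\dim R/I^t = m+1$.)

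The one step that is not formal is the cited description of $H^i_I(R)$: its vanishing outside $\{m-1, 2m-3\}$ is the heart of the matter, and it comes from the Raicu--Weyman--Witt computation of local cohomology of determinantal ideals via the representation theory of $\GL_2 \times \GL_m$ and Bott's theorem. The only genuine obstacle at the level of this proposition is bookkeeping: one must check that specializing the size of the minors to two in \cite{Witt, RWW}, together with their bound on the cohomological dimension of $I$, indeed produces the top index $2m-3$ (and not, say, $2m-2$ or $2m-4$) and that $m-1$ is the bottom index, i.e.\ that the normalizations match. Everything after that is a routine manipulation of the directed system.
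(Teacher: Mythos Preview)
Your proposal is correct and follows essentially the same route as the paper: both arguments combine the direct-limit description $H^i_I(R)\cong\dlim_t\Ext^i_R(R/I^t,R)$ with Witt's computation of the nonvanishing indices of $H^i_I(R)$ (here $m-1$ and $2m-3$) and then invoke Lemma~\ref{extInjLemma} to deduce vanishing of each $\Ext^i_R(R/I^t,R)$ from vanishing of the limit, while nonvanishing of the limit forces nonvanishing for $t\gg 0$. Your write-up is somewhat more explicit about why nonvanishing persists once it occurs and about the geometric context, but there is no substantive difference in method.
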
 
\begin{proof} 
	\par When $R = \mathbb{F}[X]$ with $X$ a $2 \times m$ matrix, and $I$ is the ideal generated by all size $2$ determinants of $X$,
	$$ \dim(R/I) =  m + 1.$$
	\par  For maximal minors, that is, $d=n$, \cite{Witt}  gives that $H_I^j(R)$ is non-vanishing if and only if the cohomological index, $j$, is equal to $(n-r)(m-n) + 1$ for some $0 \leq r < n$. Specializing to $n=2$,  we then have that $r$ can only take the values $0$ or $1$. Therefore, the module $H_I^j(R)$ is nonzero precisely at cohomological indices $2m - 3$ and $m-1$. 
	\par Recall that
	$$ H^j_I(R) = \lim\limits_{t \to \infty} \Ext^j_R(R/I^{t}, R).$$
	This formulation of $H^j_I(R)$, together with Witt's result on non-vanishing local cohomology modules, gives that $\Ext^j(R/I^t, R)$ is not $0$ for $t \gg 0$ whenever $j = 2m-3$ or $m-1$.  
	On the other hand, when $j\neq 2m-3$ or $m-1$, the fact that
	\begin{equation*} 
	\lim\limits_{t\to\infty} \Ext^j_R(R/I^{t+1}, R) = 0.
	\end{equation*} 
	together with lemma~\ref{extInjLemma} imply that $\Ext^j_R(R/I^t, R) = 0$ for all $t$. 
\end{proof} 

\par We have shown that the modules $\Ext^{j}_R(R/I^t, R)$ are non-zero for exactly two values of $j$; that is, $j= m-1$ and $j=2m-3$. Note that, in the case that $j=m-1$,
\begin{equation}
 \Ext^{m-1}(R/I^t, R)^{\vee} \cong H^{m+1}_{\mf{m}}(R/I^t)
\end{equation} 
and that $\dim(R/I^t)=m+1$. Since the local cohomology module of a Noetherian local ring at cohomological index $\dim(R/I)$ has infinite length, we have determined that the only cohomological index for which $\Ext^j_R(R/I^t, R)$ has finite, non-zero length is $j=3$. 
Since $\Ext^{2m-3}_R(R/I^t, R)$ is not zero for $t$ sufficiently large, by graded duality, the module $H^3_{\mf{m}}(R/I^t)$ must also be nonzero for $t$ sufficiently large. Now that we have determined that $H^j_{\mf{m}}(R/I^t)$ has nonzero finite length exactly when $j$ is equal to 3.

\section{Length Calculation} \label{lengthsCalculationSection}
In this section, we will calculate the lenght of $\Ext^{2m-3}_R(R/I^t, R)$ by using representation theoretic techniques to break the module down into a sum of $\Ext$ modules, and then further breaking those $\Ext$ modules into vector spaces. 

\par We rely heavily on Theorem 3.2 of \cite{Raicu}, stated in full in Theorem \ref{extSplitThm}. The details of this argument have been suppressed. However, the broad strokes of the argument are as follows. Given some module $M$ with a finite graded filtration, 
$$M_{\bullet}: 0 = M_0 \subset M_1 \subset \dots \subset M_r = M, $$ 
the module $\Ext^{j}_R(M,R)$ embeds as a graded subspace of $\bigoplus^{r-1}_{i=1}\Ext_R^j(M_{i+1}/M_{i},R)$ and this embedding can be chosen to be $\GL$-equivariant. When this map is not just an embedding but a vector space isomorphism, i.e.,  $$\Ext^{j}_R(M,R) \cong \bigoplus_{i=1}^{r-1}\Ext_R^j(M_{i+1}/M_{i},R),$$ the filtration is called \textbf{$\Ext$-split}.
\par We will use an $\Ext$-split filtration to break up $\Ext^j_R(R/I^t, R)$ into a direct sum of modules, $Ext^j_R(J_{\underline{z},l},R)$. We will further break \emph{those} modules up into a direct sum of modules described by Schur functors acting on vector spaces over $\mathbb{C}$, the lengths of which are straightforward to calculate. This decomposition is an isomorphism of graded vector spaces over $\mathbb{C}$ not necessarily as $R$-modules, but for the purpose of calculating lengths, it suffices. 

\subsection{Describing the Ext-split filtration}
\par In order to determine $J_{\underline{z},l}$, we determine the subset $\mathcal{X}$ of partitions to which $I$, the ideal generated by size two minors, is associated. Let $P_n$ denote partitions into $n$ components, in decreasing order. Recall from \cite{DEP} that we can associate to any GL-invariant ideal a subset of partitions, $\mathcal{X} \subseteq P_n$. To define the subset of partitions associated to $t^{th}$ powers of the ideal of size $d$ determinants, $I^t_{d}$, we make the following definitions.
\begin{definition} We define for each $l=1, \dots, n$ the polynomial 
	$$ {\det}_l = \det\left(X_{ij}\right)_{1\leq i, j \leq l} .$$ \end{definition} 
In other words, for some integer $l$, the polynomial $\det_l$ is the determinant of the $l \times l$ submatrix that is in top left corner of $X$. We will use the notion of a determinant with respect to one integer to define the determinant function with respect to a partition. But in order to do that, we must first define the conjugate of a partition.  
\begin{definition} For a partition $\underline{x}$ we define the conjugate partition, $\underline{x}'$ which swaps the roles of rows and columns of the Young diagram associated to $\underline{x}$. That is, $x_i'$ counts the number of boxes in column $i$ of the young diagram associated to the partition $\underline{x}$. \end{definition} 
For example, if $\underline{x} = (5,3,2)$, then $\underline{x}' = (3, 3, 2, 1, 1)$.  
\begin{definition}
	For $\underline{x} \in \mathcal{P}_n$, we define
	$$ {\det}_{\underline{x}}=\prod_{i=1}^{x_1}{\det}_{x_i'}.$$ 
\end{definition} 
The polynomial $\det_{\underline{x}}$ will be a product of minors of $X$, possibly minors of different sizes. 

\begin{definition}  Let $I_{\underline{x}}$ be the GL-orbit of $\det_{\underline{x}}$. \end{definition}  
And finally, 
\begin{definition}
	If $\mathcal{X} \subset P_n$, then define $I_{\mathcal{X}}=\sum\limits_{\underline{x}\in \mathcal{X}} I_{x}.$
\end{definition}
According to \cite{DEP}, the $t^{th}$ power of the ideal of size $D$ determinants, $I_D^t$, corresponds to  the ideal $I_{\mathcal{X}}$ given by the subset of partitions $\mathcal{X}$, where $\mathcal{X}$ is given by:  
$$\mathcal{X} = \{ \underline{x} \in \mathcal{P}_n : \lvert x \rvert = tD, x_1 \leq t \} .$$ 
Specializing to the case that $D=n=2$,  gives
$$ \mathcal{X}^t_{n} =  \{ \underline{x} \in \mathcal{P}_n : \lvert x \rvert = 2t, x_1 \leq t \}.$$ 
that is, the set $\mathcal{X}$ contains exactly the partition: $(t,t)$. 
\par In other words, the $t^{th}$ power of the ideal generated by size two minors, $I^t_2$, is equal to $I_{\mathcal{X}}$, where $\mathcal{X}$ contains exactly one partition. Since $\mathcal{X}^t_2$ contains only one element, $I_{\mathcal{X}}$ is equal to the ideal $I_{(t,t)}$, which is in turn, the $\GL$-orbit of $\det_{(t,t)}$. The polynomial $\det_{(t,t)}$ is the product 
\begin{equation} 
\prod_{i=1}^t {\det}_{2}.
\end{equation} 
We record this information less for the purpose of applying Theorem \ref{extSplitThm}. 

\par We have a system of comparing $\GL$-invariant ideals.  If $\underline{y}$ a partition, we write $\underline{x}~\subset~\underline{y}$ to indicate $x_i \leq y_i$ for all $i$, or equivalently,
$I_{\underline{x}}\subseteq I_{\underline{z}}$. A partition is the same with or without trailing $0$'s, e.g.,
the partition $(3,2,1)$ is the same as the partition $(3,2,1,0,0,0)$. By appending $0$'s, any two partitions can be compared. 

\begin{definition} 
	The collection of partitions $\mathfrak{succ}(\underline{z},l)$, given some positive integer $l$ is defined to be 
	$$\mathfrak{succ}(\underline{z},l)=\{ \underline{x}\in \mathcal{P}_n :\underline{x}>\underline{z} \text{ and } x_i > z_i \text{ for some } i>l \}. $$
\end{definition} 
Note that for all $\underline{x}\in \mathfrak{succ}(\underline{z}, l)$, one has inclusion of the corresponding ideals, $I_{\underline{x}} \subseteq I_{\underline{z}}$, so the following definition makes sense:

\begin{definition} $$J_{\underline{z}, l} \coloneqq I_{\underline{z}, l} / I_{\mathfrak{succ}(\underline{z}, l)}$$ \end{definition} 

We are now ready to state theorem \ref{extSplitThm}, which will allow us to break $\Ext^{2m-3}_R(R/I^t, R)$ into a direct sum of other $\Ext$ modules. 
\begin{theorem}[\cite{Raicu17}] \label{extSplitThm}
	Let $\mathcal{X} \subseteq P_n$ and let $I_{\mathcal{X}} \subset R$ denote the associated $\GL$-invariant ideal. There exists a $\GL$-invariant $\Ext$-split filtration of $R/I_{\mathcal{X}}$ whose factors are the modules $J_{(\underline{z}, l)}$ for $(\underline{z}, l) \in \mathcal{Z}(\mathcal{X})$ and therefore we have for each $j \geq 0$ a $\GL$-equivariant isomorphism of graded vector spaces $$ \Ext^j_R(R/I_{\mathcal{X}},R) \cong \bigoplus\limits_{(\underline{z},l) \in \mathcal{Z}(\mathcal{X})} \Ext^j_R(J_{\underline{z},l}, R),$$
	where the set $\mathcal{Z}(\mathcal{X})$ is given by  \begin{equation}
	\label{zset} 
	\begin{split} 
	\mathcal{Z}(\mathcal{X}^t_D) =
	\{
	&(\underline{z}, l): 0 \leq l \leq D-1, \ \underline{z} \in \mathcal{P}_n, \  z_1 = \dots = z_{l+1} \leq t-1 \text{ and } \\ 
	& \lvert \underline{z} \rvert + (t-z_1)  l + 1 \leq D  t \leq \lvert \underline{z} \rvert + (t-z_1)(l+1) 
	\}
	\end{split}  
	\end{equation}

\end{theorem}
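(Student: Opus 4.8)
The plan is to reconstruct the argument of \cite{Raicu17}. I would first recall the representation-theoretic setup over $\mathbb{C}$: the Cauchy decomposition $R \cong \bigoplus_{\underline{\lambda}} \mathbb{S}_{\underline{\lambda}}(\CC^n) \otimes \mathbb{S}_{\underline{\lambda}}(\CC^m)$, the sum over partitions $\underline{\lambda}$ with at most $n$ parts, together with the De Concini--Eagon--Procesi classification \cite{DEP}, by which every $\GL$-invariant ideal of $R$ equals $I_{\mathcal{X}}$ for a unique subset $\mathcal{X} \subseteq \mathcal{P}_n$ that is upward closed for the containment order $\underline{x} \subseteq \underline{y}$ (equivalently, $\mathcal{P}_n \setminus \mathcal{X}$ is a finite ``staircase'' of partitions). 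In this dictionary $I_{\underline{x}} \subseteq I_{\underline{z}}$ iff $\underline{z} \subseteq \underline{x}$, every $\GL$-invariant submodule of $R/I_{\mathcal{X}}$ again corresponds to an upward closed subset of $\mathcal{X}$, and---the principle I will use repeatedly---any $\GL$-equivariant map between $\GL$-invariant subquotients is controlled, via Schur's lemma applied in each internal degree, by which irreducible constituents $\mathbb{S}_{\underline{\mu}}(\CC^n) \otimes \mathbb{S}_{\underline{\nu}}(\CC^m)$ occur on the two sides; in particular it must vanish when these sets are disjoint.

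Next I would construct the filtration. Fix a total order on $\mathcal{Z}(\mathcal{X})$ refining the containment order on the partitions $\underline{z}$ (breaking ties by $l$), and build a chain of $\GL$-invariant submodules $0 = N_0 \subset N_1 \subset \dots \subset N_r = R/I_{\mathcal{X}}$ with $N_s/N_{s-1}$ equal, by construction, to the module $J_{\underline{z}^{(s)},l^{(s)}}$ attached to the $s$-th pair. The content of this step is combinatorial: one shows that the ``inner corners'' of the staircase $\mathcal{P}_n \setminus \mathcal{X}$ are parametrized exactly by $\mathcal{Z}(\mathcal{X})$---which is what the two chains of inequalities in \eqref{zset} encode---and that peeling these corners off in the chosen order exhausts $R/I_{\mathcal{X}}$ with precisely the prescribed factors, each occurring once. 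For the situation used in this paper ($D = n = 2$, so $\mathcal{X}^t_2 = \{(t,t)\}$) this bookkeeping is short, but the statement in the generality of Theorem \ref{extSplitThm} requires the full argument.

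To obtain $\Ext$-splitness I would apply $\Ext^\bullet_R(-,R)$ to the short exact sequences $0 \to N_{s-1} \to N_s \to J_{\underline{z}^{(s)},l^{(s)}} \to 0$ and show by induction on $s$ that the resulting long exact sequences break into short ones. Since $\Ext^\bullet_R(-,R)$ is $\GL$-equivariant, this amounts to proving that each connecting map $\Ext^j_R(N_{s-1},R) \to \Ext^{j+1}_R(J_{\underline{z}^{(s)},l^{(s)}},R)$ is zero. Using the inductive isomorphism $\Ext^j_R(N_{s-1},R) \cong \bigoplus_{s' < s} \Ext^j_R(J_{\underline{z}^{(s')},l^{(s')}},R)$, it is enough to know the $\GL$-characters of the modules $\Ext^j_R(J_{\underline{z},l},R)$ well enough to see that for $s' \neq s$ the irreducible constituents of $\Ext^j_R(J_{\underline{z}^{(s')},l^{(s')}},R)$ and of $\Ext^{j+1}_R(J_{\underline{z}^{(s)},l^{(s)}},R)$ are disjoint (for instance, their internal degrees, or their $\GL_m$-isotypic types, never coincide); by the Schur's lemma principle above each connecting map then vanishes, the sequences split, and induction delivers the asserted direct-sum decomposition.

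I expect the main obstacle to be precisely this last input: producing an explicit $\GL$-equivariant description of each $\Ext^j_R(J_{\underline{z},l},R)$ and verifying disjointness of supports across distinct factors. Following \cite{RWW, Witt}, one would get this by resolving each $J_{\underline{z},l}$ equivariantly---for example via Weyman's geometric technique applied to an appropriate incidence variety, or through a direct characteristic-zero local cohomology computation---reading off the Ext modules and the partitions occurring in them, and then checking disjointness as a (delicate) list of inequalities in the containment order on partitions. Finally, one should observe that although the modules $J_{\underline{z},l}$ and their Ext modules are infinitely generated, each is finite-dimensional in every fixed internal degree, so the long exact sequences and Schur's lemma arguments above are all carried out degree by degree and remain valid.
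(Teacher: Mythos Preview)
The paper does not prove this theorem: it is quoted from \cite{Raicu17} (the header reads ``\textbf{Theorem} [\cite{Raicu17}]''), and the surrounding text is explicit that ``the details of this argument have been suppressed.'' So there is no proof in this paper to compare your proposal against; the author simply imports the statement and then spends the rest of Section~\ref{lengthsCalculationSection} specializing it to $n=D=2$.

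As a reconstruction of the cited argument your outline is broadly on target---Cauchy decomposition plus the DEP classification to set up the $\GL$-invariant lattice, a combinatorial peeling of $R/I_{\mathcal{X}}$ into the $J_{\underline{z},l}$, and then a degeneration of the long exact sequences in $\Ext$ via an equivariant multiplicity argument---and you correctly flag the hard step as the character computation for $\Ext^j_R(J_{\underline{z},l},R)$ together with the disjointness check. One caution: the vanishing of the connecting homomorphisms in Raicu's argument is not literally obtained by showing that the $\GL$-isotypic supports of $\Ext^j_R(J_{\underline{z}^{(s')},l^{(s')}},R)$ and $\Ext^{j+1}_R(J_{\underline{z}^{(s)},l^{(s)}},R)$ are disjoint for all $s'<s$; rather, one works with the subquotients $J_{\underline{z},l}$ themselves (whose irreducible constituents are pairwise disjoint by construction) and uses the geometric resolutions of \cite{RW14} to control which weights can appear in which cohomological degree. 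If you try to run your version verbatim you will find cases where the same $\mathbb{S}_\lambda\CC^m\otimes\mathbb{S}_\mu\CC^n$ shows up in two different $\Ext$ groups, so the bare ``disjoint supports $\Rightarrow$ zero map'' step needs to be replaced by the finer bookkeeping in \cite{Raicu17}. But since the present paper neither reproduces nor sketches that argument, there is nothing here to grade your proposal against.
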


\par The set $\mathcal{Z}(\mathcal{X}^t_D)$ is a collection of partitions of length $2$, $\underline{z} = (z_1, z_2) $ and an integer, $l$. In the following two lemmas, we completely characterize the set $\mathcal{Z}(\mathcal{X}^t_D)$. 
\\ \begin{lem} In the above decomposition, the integer $l=1$ for all $(\underline{z}, l) \in \mathcal{Z}(\mathcal{X}^t_D)$.  
\end{lem}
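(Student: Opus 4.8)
The plan is to simply unwind the definition of the set $\mathcal{Z}(\mathcal{X}^t_D)$ from Theorem~\ref{extSplitThm} after specializing to $D = n = 2$. Since the defining conditions require $0 \leq l \leq D - 1 = 1$, the only possibilities are $l = 0$ and $l = 1$, so it suffices to rule out $l = 0$. First I would fix $t$ and suppose, for contradiction, that some pair $(\underline{z}, 0)$ with $\underline{z} = (z_1, z_2) \in \mathcal{P}_2$ lies in $\mathcal{Z}(\mathcal{X}^t_2)$.

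With $l = 0$, the requirement $z_1 = \dots = z_{l+1} \leq t - 1$ becomes simply $z_1 \leq t - 1$. Next I would substitute $l = 0$ and $D = 2$ into the chain of inequalities $\lvert \underline{z} \rvert + (t - z_1)l + 1 \leq Dt \leq \lvert \underline{z} \rvert + (t - z_1)(l+1)$, which reads
$$(z_1 + z_2) + 1 \leq 2t \leq (z_1 + z_2) + (t - z_1).$$
The right-hand inequality simplifies to $2t \leq t + z_2$, that is, $t \leq z_2$. But $\underline{z}$ is a partition, so $z_2 \leq z_1 \leq t - 1 < t$, which contradicts $t \leq z_2$. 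Hence no element of $\mathcal{Z}(\mathcal{X}^t_2)$ has $l = 0$, and since $l \in \{0, 1\}$, every element satisfies $l = 1$.

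There is essentially no obstacle here: the argument is a one-line computation once the definition is specialized. The only points requiring a little care are the specialization $D = 2$ (which is what forces $l \leq 1$ in the first place) and the use of the partition ordering $z_2 \leq z_1$, which is precisely what turns the second displayed inequality into the needed contradiction. This lemma is the first of the two promised reductions: knowing $l = 1$ collapses $\mathcal{Z}(\mathcal{X}^t_2)$ to a family indexed only by the partition $\underline{z}$, which the next lemma then pins down explicitly.
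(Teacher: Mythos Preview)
Your argument is correct and follows essentially the same route as the paper: specialize to $D=2$ so that $l\in\{0,1\}$, assume $l=0$, and use the right-hand inequality to force $z_2\geq t$, contradicting $z_2\leq z_1\leq t-1$. The only difference is cosmetic presentation.
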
 
\begin{proof} The very first inequality gives that $0 \leq l \leq 1$, that is, $l$ is either $0$ or $1$. Assume that $l=0$, and specialize to the case that $D=2$. By the second line of (\ref{zset}), we have
	\begin{alignat*}{3}
	z_1 + z_2 + (t-z_1)\cdot 0 + 1 & \leq Dt && \leq z_1 + z_2 + (t - z_1)\cdot (0+1) \\
	z_1 + z_2 + 1 & \leq 2t && \leq z_1 + z_2 + t - z_1 \\ 
	z_1 + z_2 + 1 &\leq 2t && \leq z_2 + t \\ 
	z_1 + 1 &\leq 2t - z_2 && \leq t   
	\end{alignat*}
	Specifically, $2t-z_2 \leq t$, so $z_2 \geq t$. However, by the top line of Equation (\ref{zset}), $z_1 \leq t-1$. Since $\underline{z}$ is a partition, $z_1 > z_2$. Thus, it cannot be that $l= 0$, and so $l=1$. 
\end{proof} 
\begin{lem} 

	The set $\mathcal{Z} \left( \mathcal{X}^t_D \right)$ is exactly 
	$$ \{ ((z, z),1), z \leq t-1 \}$$ 
\end{lem}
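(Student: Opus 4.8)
The plan is to feed the conclusion of the preceding lemma into the defining conditions of $\mathcal{Z}(\mathcal{X}^t_D)$ and simplify. Since we have already shown that $l = 1$ for every pair $(\underline{z},l) \in \mathcal{Z}(\mathcal{X}^t_2)$, it remains only to determine which partitions $\underline{z} = (z_1,z_2) \in \mathcal{P}_2$ can occur alongside $l = 1$.

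First I would specialize the conditions in~(\ref{zset}) to $D = 2$ and $l = 1$. The requirement $z_1 = \dots = z_{l+1} \leq t-1$ then reads $z_1 = z_2 \leq t - 1$; writing $z$ for this common value gives $\lvert \underline{z} \rvert = 2z$, and the remaining two-sided inequality becomes
$$ 2z + (t-z)\cdot 1 + 1 \;\leq\; 2t \;\leq\; 2z + (t-z)\cdot 2, $$
that is,
$$ z + t + 1 \;\leq\; 2t \;\leq\; 2t. $$
Next I would observe that the right-hand inequality $2t \leq 2t$ is automatic and imposes nothing, while the left-hand inequality $z + t + 1 \leq 2t$ is equivalent to $z \leq t - 1$, which is exactly the bound already supplied by $z_1 \leq t - 1$. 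Thus the surviving constraints on $\underline{z}$ are precisely $z_1 = z_2 = z$ and $0 \leq z \leq t - 1$. Conversely, any pair $((z,z),1)$ with $0 \leq z \leq t-1$ visibly satisfies all the conditions of~(\ref{zset}), so the two sets coincide, giving $\mathcal{Z}(\mathcal{X}^t_2) = \{((z,z),1) : z \leq t-1\}$ as claimed.

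I do not anticipate a real obstacle: all the substantive work is in the previous lemma, which pinned down $l$, and once $l = 1$ is known the system of inequalities collapses to a single trivially verified bound. The only point requiring care is bookkeeping --- reading the index range $z_1 = \dots = z_{l+1}$ correctly as $z_1 = z_2$ when $l = 1$, and not silently dropping the non-negativity $z \geq 0$ built into $\underline{z} \in \mathcal{P}_n$, so that the set is understood to have exactly $t$ elements, one for each $z$ with $0 \leq z \leq t-1$.
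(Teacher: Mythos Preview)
Your proposal is correct and follows essentially the same route as the paper: specialize the defining conditions of $\mathcal{Z}(\mathcal{X}^t_D)$ to $l=1$, read off $z_1=z_2\eqqcolon z\leq t-1$ from the first line, and verify that the two-sided inequality collapses to $z+t+1\leq 2t\leq 2t$, which imposes nothing new. Your explicit check of the converse and of the non-negativity $z\geq 0$ is a nice bit of tidiness that the paper leaves implicit.
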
 
\begin{proof} 

	\par Given that $l=1$, the top line of Equation (\ref{zset}) gives that $z_1 = z_2 $. From here on, we will denote $z_1 = z_2$ as $z$. The top line also gives that $z \leq t-1$. 
	\par The second line of Equation (\ref{zset}) gives no further restrictions. The second line requires that 
	\begin{alignat*}{3}
	z+z + (t-z)\cdot 1 + 1 &\leq 2t &&\leq z + z + (t-z)\cdot 2 \\ 
	2z + t - z + 1 & \leq 2t &&\leq 2z + 2(t - z) \\ 
	z + t + 1 &\leq 2t &&\leq 2t  
	\end{alignat*} 
	That is, the condition of the second line is simply $z+1 \leq t$, which is the exact same as the condition $z \leq t-1$ given by the first line. 
	\par Therefore, in the case of $n=p=2$, 
	$$\mathcal{X}^t_p = \{ ((z, z),1) : z \leq t-1  \}. \qedhere 
	$$
\end{proof}  
\par Thus, for each successive power of $I^t$, we have exactly one $\underline{z}$ that was not a component of $I^{t-1}$:  $$\underline{z} = (t-1, t-1).$$
\subsection{Decomposing $Ext^j_S(J_{\underline{z},l}, S)$}
\par Breaking $\Ext^j_S(S/I^t_d, S)$ into components $\Ext^j_S(J_{\underline{z}, l}, S)$ is helpful because \emph{these} modules have been completely described. To understand this description, we first must define dominant weights and their associated Schur functors. 
\begin{definition} A dominant weight is a $\lambda \in \mathbb{Z}^{\mathbb{N}}$ such that $\lambda_1 \geq \lambda_2 \geq \dots \geq \lambda_N$. The associated Schur functor has the property that $$ \mathbb{S}_{\lambda + (1^N) }\mathbb{C}^N \cong \mathbb{S}_{\lambda } \mathbb{C}^N \otimes \bigwedge^{N}\mathbb{C}^N $$
\end{definition} 

Using theorem \ref{extSplitThm}, we broke the module of interest, $\Ext^{2m-3}(R/I^t, R)$, into a direct sum of modules of the form $\Ext^{2m-3}(J_{\underline{z}, l}, R)$. We now give a vector space isomorphism of modules of the form $\Ext^j_{R}(J_{(\underline{z},l)}, R)$ and direct sums of Schur functors acting on vector spaces over $\mathbb{C}$. The benefit of this is that the dimension of the image of a vector space under the action of a Schur functor is straightforward to calculate.  
\begin{theorem}[\cite{RW14, Raicu}]
	$$\Ext^j_S(J_{\underline{z}, l}, R) = \bigoplus\limits_{\substack{0 \leq s \leq t_1 \leq l\\ m\cdot n - l^2 - s(m-n) -2 \sum t_i = j \\ \lambda \in W(\underline{z}, l; \underline{t}, s)}} \mathbb{S}_{\lambda(s)}\mathbb{C}^m \otimes \mathbb{S}_{\lambda} \mathbb{C}^n $$
	where $  W(\underline{z}, l; \underline{t}, s)$ is the set of dominant weights $\lambda \in \mathbb{Z}_{dom}^2$ satisfying:  
	$$
	\begin{cases} 
	\lambda_n \geq l - z_l -m \\ 
	\lambda_{t_i + i } = t_i - z_{n+1-i} - m \text{ for } i = 1, \dots, n-l \\ 
	\lambda_s \geq s-n \text{ and } \lambda_{s+1} \leq s-m 
	\end{cases} 
	$$
	and $\mathbb{S}_{\lambda}$ denotes the Schur functor associated to the partition or dominant weight $\lambda$. \end{theorem}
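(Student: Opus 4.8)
The statement is the Raicu--Weyman description of the $\GL$-equivariant $\Ext$ modules of the building blocks $J_{\underline z,l}$, and the proof I would give follows the geometric (Kempf--Lascoux--Weyman) method used in \cite{RW14, Raicu}. The first step is to realize $J_{\underline z,l}$ geometrically: the ideals $I_{\underline z}$ and $I_{\mathfrak{succ}(\underline z,l)}$ are generated by products of minors of the generic matrix $X$, so, viewing $X$ as a map of free modules $\phi\colon \mathcal{O}^m \to \mathcal{O}^n$ over $\Spec R$, the module $J_{\underline z,l}$ is a $\GL$-equivariant sheaf supported on a degeneracy locus of $\phi$ and admits a resolution obtained by pushing forward an explicit Koszul-type complex along a Kempf collapsing $q\colon Z \to \Spec R$. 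Here $Z$ is the total space of a homogeneous vector bundle over a partial flag variety of $\mathbb{C}^n$ whose step sizes encode the ranks recorded by $\underline z$, $l$ and an auxiliary twist $s$; when $n=2$ this flag variety is just a point or $\mathbb{P}^1$, which is what makes the case at hand tractable.

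Second, I would compute $\Ext^j_R(J_{\underline z,l},R)$ by Grothendieck duality: it is the hypercohomology of the derived dual of the pushed-forward complex, and by the projection formula for the proper map $q$ (which is birational onto its image) this equals $H^\bullet$ of a finite complex of $\GL_m\times\GL_n$-equivariant vector bundles on the flag variety. Up to an $s(m-n)$-fold twist by $\det$, each term is of the form $\mathbb S_\alpha\mathcal{R}\otimes\mathbb S_\beta\mathcal{Q}$ for the tautological sub- and quotient bundles, with $\alpha,\beta$ read off from $\underline z$, $l$ and the Koszul degree $\underline t$.

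Third, evaluate these cohomology groups via Borel--Weil--Bott. Bott's algorithm applied to the weight assembled from $\underline z$, $l$, $\underline t$ and $s$ either returns $0$ --- exactly when, after adding $\rho$, the weight fails to be regular --- or a single summand $\mathbb S_{\lambda(s)}\mathbb C^m\otimes\mathbb S_\lambda\mathbb C^n$, placed in cohomological degree equal to the length of the Weyl element sorting the weight; the matching $\lambda$ on the two factors is the usual ``glue'' along the common torus. The conditions cutting out $W(\underline z,l;\underline t,s)$ then have the expected shape: the equalities $\lambda_{t_i+i}=t_i-z_{n+1-i}-m$ are the coordinates of $\lambda$ pinned down by the Koszul term, while $\lambda_n\ge l-z_l-m$ and $\lambda_s\ge s-n$, $\lambda_{s+1}\le s-m$ are the dominance/regularity inequalities on the remaining freedom and on the $\det$-twist. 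Summing the cohomological shifts contributed by each term of the complex produces the arithmetic identity $mn-l^2-s(m-n)-2\sum t_i=j$, with $-l^2$ and $-s(m-n)$ recording the fibre dimension of the collapsing and the twist, and $-2\sum t_i$ coming from the Koszul differentials (one factor from the complex, one from dualizing). Finally one checks that distinct terms of the complex land in distinct cohomological degrees, so the spectral sequence computing $\Ext$ degenerates and the decomposition is a genuine direct sum.

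The main obstacle is precisely this no-cancellation/degeneration point: to get the stated \emph{equality} (rather than $\Ext^j$ being merely a subquotient of the direct sum) one must run Bott's algorithm uniformly over all $(\underline z,l)$ and all Koszul indices $\underline t$ and verify that no two terms of the complex contribute Schur functors in the same bidegree. For $n=2$ this is routine, since weights in $\mathbb Z^2_{dom}$ are trivial to sort and the defining conditions collapse as in the two preceding lemmas; for the general statement one cites \cite{RW14, Raicu} directly, and for the purposes of this paper it then suffices to specialize their theorem, after checking that the degree normalization agrees with the one used above for $\Ext^{2m-3}_R(R/I^t,R)$.
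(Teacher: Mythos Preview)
The paper does not prove this theorem: it is quoted verbatim from \cite{RW14, Raicu} and used as a black box, with no argument supplied. Your proposal correctly recognizes this (your final paragraph says as much) and gives a faithful high-level sketch of the Kempf--Weyman geometric method and Borel--Weil--Bott computation that underlies the cited result; this is consistent with the paper's treatment, which simply invokes the reference.
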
   
\begin{lem} In the above set $W(\underline{z}, l; \underline{t}, s)$ the integer $t_1=1$ and the integer $s=0$. \end{lem}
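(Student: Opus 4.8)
The plan is to deduce the claim purely from the two side conditions attached to the index set in the decomposition of $\Ext^j_R(J_{\underline{z},l},R)$, namely the chain $0\le s\le t_1\le l$ and the cohomological degree identity $mn-l^2-s(m-n)-2\sum t_i=j$. Recall from the preceding lemmas that here $l=1$ and $z_1=z_2=:z$, and that the module actually being decomposed is $\Ext^{2m-3}_R(R/I^t,R)\cong\bigoplus_{(\underline{z},l)}\Ext^{2m-3}_R(J_{\underline{z},l},R)$, so the relevant value of $j$ is $2m-3$.

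First I would observe that, since $n-l=2-1=1$, the tuple $\underline{t}$ consists of the single entry $t_1$, so $\sum t_i=t_1$ and the chain $0\le s\le t_1\le l$ becomes $0\le s\le t_1\le 1$; thus the only candidate pairs are $(s,t_1)\in\{(0,0),(0,1),(1,1)\}$. Next I would substitute $j=2m-3$, $n=2$, $l=1$ into the degree identity to obtain
$$2m-1-s(m-2)-2t_1=2m-3,$$
that is, $s(m-2)+2t_1=2$. Since $2<m$ we have $m-2\ge 1$; if $s\ge 1$ then $s\le t_1\le 1$ forces $s=t_1=1$, whence $s(m-2)+2t_1=(m-2)+2=m\ge 3>2$, a contradiction. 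Hence $s=0$, and then $2t_1=2$ gives $t_1=1$, which is exactly the assertion.

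I do not expect a genuine obstacle: the only points requiring care are the conventions — that $\underline{t}$ degenerates to a single integer once $n-l=1$, and that the standing hypothesis $2<m$ (equivalently $m\ge 3$) is precisely what rules out the pair $(1,1)$. As a consistency check one can note that the discarded pair $(1,1)$ contributes instead in cohomological degree $2m-1-(m-2)-2=m-1$, which is the other index where $\Ext^{\bullet}_R(R/I^t,R)$ is nonzero and where $H^{m+1}_{\mf{m}}(R/I^t)$ has infinite length, while the pair $(0,0)$ contributes in degree $2m-1$ and in fact has empty weight set: its conditions pin $\lambda_1=-z-m$ yet force $\lambda_2\ge 1-z-m>\lambda_1$, contradicting the dominance of $\lambda$.
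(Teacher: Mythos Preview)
Your argument is correct and follows essentially the same route as the paper: both reduce to the identity $s(m-2)+2t_1=2$ under the constraint $0\le s\le t_1\le 1$ and eliminate cases, the only cosmetic difference being that the paper first rules out $t_1=0$ and then forces $s=0$, whereas you first rule out $s\ge 1$ and then force $t_1=1$. Your added consistency remarks about the pairs $(1,1)$ and $(0,0)$ are accurate but extraneous to the lemma itself.
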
  
\begin{proof} 
	From above, we know that $l=1$. Thus we have that $s, t_1 \in \{ 0,1\}$. The second line under the direct sum demands that $2m - 1 -s(m-n)-2t_1 = j$. 

	 So, in this case,$$mn - l^2 - s(m-n) -2 \sum t_i = j$$ turns into: 
	\begin{align*}
	2m - 1 - s(m-2) -2t_1 &= 2m-3 \\ 
	2m - 1 - sm + 2s -2t_1 &= 2m-3  
	\end{align*} 
	If $t_1 = 0$, then $s=0$ as well, since $s \leq t$. So the equation becomes: \begin{align*} 
	2m - 1 = 2m -3 
	\end{align*} 
	a contradiction.  Thus $t_1 = 1$. 
	\\ Now the equation becomes 
	\begin{align*}
	2m - 1 - s(m-2) -2 &= 2m-3 \\ 
	2m - 3 - s(m-2) &= 2m - 3 \\
	s(m-2) &= 0
	\end{align*}
	Since we assume that $m>2$, it must be that $s=0$.  
\end{proof} 
\begin{lem} The set $W(\underline{z}, 1; (1), 0)$ consists of all $\lambda = (\lambda_1, \lambda_2 )$ of the following form: 
	\begin{align*}
	1-z-m \leq &\lambda_1 \leq -m   \\
	&\lambda_2 = 1-z-m 
	\end{align*} 
\end{lem}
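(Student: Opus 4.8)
The plan is purely computational: I would substitute into the definition of $W(\underline{z}, l; \underline{t}, s)$ the parameter values that the three preceding lemmas have already pinned down — namely $n = 2$, $l = 1$, $\underline{t} = (t_1) = (1)$, $s = 0$, and $\underline{z} = (z, z)$ — and then read off the constraints on a dominant weight $\lambda = (\lambda_1, \lambda_2) \in \mathbb{Z}_{dom}^2$, keeping in mind the standing requirement $\lambda_1 \geq \lambda_2$. I would run through the three defining conditions of $W$ in order.

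First, the middle (equality) condition $\lambda_{t_i + i} = t_i - z_{n+1-i} - m$ ranges over $i = 1, \dots, n - l$, and here $n - l = 1$, so it is the single equation at $i = 1$: $\lambda_{t_1 + 1} = \lambda_2 = t_1 - z_2 - m = 1 - z - m$. This determines $\lambda_2$ outright, and it makes the first condition $\lambda_n = \lambda_2 \geq l - z_l - m = 1 - z - m$ redundant, since it then holds with equality. Thus, after the first two conditions, the only remaining freedom is in $\lambda_1$.

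Next I would turn to the last condition, $\lambda_s \geq s - n$ and $\lambda_{s+1} \leq s - m$, evaluated at $s = 0$. This is the one place where a reader needs a word of explanation: one must interpret the $\lambda_s$ clause appropriately when $s=0$ (via the usual convention $\lambda_0 = +\infty$), so that $\lambda_0 \geq -2$ is vacuous and the only genuine constraint surviving is $\lambda_1 \leq 0 - m = -m$. Combining $\lambda_1 \leq -m$ with the dominance inequality $\lambda_1 \geq \lambda_2 = 1 - z - m$ yields $1 - z - m \leq \lambda_1 \leq -m$ together with $\lambda_2 = 1 - z - m$, which is exactly the asserted description of $W(\underline{z}, 1; (1), 0)$.

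Since the argument is nothing more than careful substitution, I do not anticipate a serious obstacle; the only delicate point — and the one I would flag explicitly — is the treatment of the index-zero entry $\lambda_s$ at $s = 0$, where the $\lambda_0 = +\infty$ convention replaces an honest inequality. As a consistency check I would observe that the displayed interval for $\lambda_1$ is nonempty precisely when $z \geq 1$, matching the fact that the new partition appearing at the $t$-th power is $\underline{z} = (t-1, t-1)$ with $t - 1 \geq 1$ for all $t \geq 2$, while the degenerate value $z = 0$ gives an empty set, consistent with the vanishing of $\Ext^{2m-3}_R(R/I, R)$ for $t = 1$ (the determinantal ring $R/I$ is Cohen--Macaulay of codimension $m - 1$, which differs from $2m - 3$ when $m > 2$).
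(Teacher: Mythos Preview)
Your proposal is correct and follows essentially the same approach as the paper: both substitute $n=2$, $l=1$, $t_1=1$, $s=0$, $\underline{z}=(z,z)$ into the three defining conditions of $W$, obtain $\lambda_2 = 1-z-m$ from the equality clause, $\lambda_1 \leq -m$ from the third clause, and $\lambda_1 \geq \lambda_2$ from dominance. Your write-up is slightly more explicit about why the $\lambda_s \geq s-n$ clause is vacuous at $s=0$ and adds a consistency check on the nonemptiness of the range, but the argument is the same direct substitution.
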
 

\begin{proof}  
	Substituting in $n=2, l =1, t_1 = 1$ and $s= 0$ in the description of $W(\underline{Z}, 1; (1),0)$ gives: 
	$$
	\begin{cases} 
	\lambda_2 \geq 1 - z_1 -m \\ 
	\lambda_{2} = 1 - z_{2} - m \text{ for } i = 1\\ 
	\lambda_{1} \leq -m 
	\end{cases} 
	$$ Thus, we have, for fixed $z$, the integer $\lambda_2$ is defined to be $-1 - z - m$. 
	The integer $\lambda_1$ satisfies $\lambda_1 \leq -m$, and since $\lambda$ is a dominant weight, $\lambda_1 \geq \lambda_2$. We have entirely specified what form $\lambda$ takes.
\end{proof} 
\subsection{Calculating Lengths} 
We will calculate the rank of components of $\Ext^{2m-3}_R(R/I^{t}, R)$ that do not appear in $\Ext^{2m-3}_R(R/I^{t-1}, R)$ by considering only when $z=t-1$.

\par A straightforward argument on induced long exact sequences on $Ext$, combined with the vanishing of $Ext$ modules at other indices from \cite{RWW}, shows that, by calculating the rank of components in $\Ext^{2m-3}_R(R/I^t, R)$ that do not appear in $\Ext^{2m-3}_R(R/I^{t-1}, R)$, we are in fact calculating $\ell (\Ext^{2m-3}_R (I^{t-1}/I^{t}),R)$. We mainly use this fact for notational ease.
\par In the case that $z=t-1$, the partition $\lambda$ is $(\lambda_1, 2 - t - m)$ where $2-t-m \leq \lambda_1 \leq -m$. We write $\lambda_1$ as $$\lambda_1 = 2-t-m + \epsilon$$ where $\epsilon \in \{0, \dots, t-2 \}$ .  
Recall that we can think of $\Ext^{2m-3}_R(R/I^t, R)$ as direct sums of objects of the form: 
$S_{\lambda(s)}\mathbb{C}^m \otimes S_{\lambda} \mathbb{C}^2 $. So in order to calculate $\ell(\Ext^{2m-3}_R(R/I^t, R))$, we must calculate $\rank(\mathbb{S}_{\lambda}\mathbb{C}^N)$. 

\begin{lem} \label{Schur func}   $\rank \left(\mathbb{S}_{\lambda}\mathbb{C}^N\right) = \rank\left( \mathbb{S}_{\lambda+(c^N)}\mathbb{C}^N \right)$
\end{lem}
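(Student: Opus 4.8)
The plan is to reduce to the single defining property of Schur functors for dominant weights recorded just above, namely $\mathbb{S}_{\mu + (1^N)}\mathbb{C}^N \cong \mathbb{S}_{\mu}\mathbb{C}^N \otimes \bigwedge^N \mathbb{C}^N$, together with the observation that $\bigwedge^N \mathbb{C}^N$ is one-dimensional. First I would note that for any $c \in \mathbb{Z}$ the tuple $\lambda + (c^N)$ is again a dominant weight, since adding the same integer to every coordinate preserves the condition $\lambda_1 \ge \lambda_2 \ge \cdots \ge \lambda_N$; so both sides of the claimed equality make sense.

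Next I would prove the statement for $c \ge 0$ by induction on $c$. The case $c = 0$ is a tautology. For $c \ge 1$, write $\lambda + (c^N) = \bigl(\lambda + ((c-1)^N)\bigr) + (1^N)$ and apply the defining property with $\mu = \lambda + ((c-1)^N)$ to get
$$ \mathbb{S}_{\lambda + (c^N)}\mathbb{C}^N \;\cong\; \mathbb{S}_{\lambda + ((c-1)^N)}\mathbb{C}^N \otimes \bigwedge^N \mathbb{C}^N . $$
Since $\dim_{\mathbb{C}} \bigwedge^N \mathbb{C}^N = 1$, taking dimensions gives $\rank\bigl(\mathbb{S}_{\lambda+(c^N)}\mathbb{C}^N\bigr) = \rank\bigl(\mathbb{S}_{\lambda+((c-1)^N)}\mathbb{C}^N\bigr)$, which by the inductive hypothesis equals $\rank(\mathbb{S}_\lambda \mathbb{C}^N)$. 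For $c < 0$, I would simply apply the case just proved to the dominant weight $\lambda + (c^N)$ and the nonnegative integer $-c$, since $(\lambda + (c^N)) + ((-c)^N) = \lambda$; this yields $\rank(\mathbb{S}_{\lambda+(c^N)}\mathbb{C}^N) = \rank(\mathbb{S}_\lambda\mathbb{C}^N)$ as well, completing the proof.

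There is essentially no obstacle here: the only point requiring a word of care is that $c$ is allowed to be negative (so that the lemma can later be used to shift a weight with negative entries up to an honest partition), and this is handled by the symmetric reformulation above. As an alternative self-contained argument one could instead invoke the Weyl dimension formula $\rank(\mathbb{S}_\lambda \mathbb{C}^N) = \prod_{1 \le i < j \le N} \frac{\lambda_i - \lambda_j + j - i}{j - i}$, whose right-hand side depends only on the differences $\lambda_i - \lambda_j$ and is therefore manifestly unchanged under $\lambda \mapsto \lambda + (c^N)$; but the inductive argument via $\bigwedge^N\mathbb{C}^N$ is cleaner given what has already been set up in the paper.
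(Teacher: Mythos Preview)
Your proof is correct and follows essentially the same route as the paper's: both use the isomorphism $\mathbb{S}_{\mu+(1^N)}\mathbb{C}^N \cong \mathbb{S}_\mu\mathbb{C}^N \otimes \bigwedge^N\mathbb{C}^N$ together with $\dim \bigwedge^N\mathbb{C}^N = 1$, and then induct on $c$. You add the small extra step of handling $c<0$ by symmetry, which the paper leaves implicit; otherwise the arguments coincide.
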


\begin{proof} In the proof of Theorem 3.2, \cite{Raicu17} gives
	$$ \mathbb{S}_{\lambda + (1^N) } \cong \mathbb{S}^{\lambda } \mathbb{C}^N \otimes \bigwedge^{N}\mathbb{C}^N $$ 
	Since $\bigwedge^N \mathbb{C}^N$ has rank $1$, 
	$$ \rank\mathbb{S}_{\lambda + (1^N) } = \rank\mathbb{S}^{\lambda } \mathbb{C}^N $$
	and by induction, 
	$$ \rank\mathbb{S}_{\lambda + (c^N) } = \rank\mathbb{S}^{\lambda } \mathbb{C}^N .$$
\end{proof} 
Then 
\begin{align*} 
\rank(\mathbb{S}_{\lambda} \mathbb{C}^2) &= \rank(\mathbb{S}_{(-2-t - m + \epsilon, -2-t-m)}\mathbb{C}^2) \\
&= \rank(\mathbb{S}_{(-2-t - m + \epsilon +(2+t+m), -2-t-m+(2+t+m))}\mathbb{C}^2)\\ 
&= \rank(\mathbb{S}_{(\epsilon, 0)}\mathbb{C}^2)
\end{align*}
The Schur functor $\mathbb{S}_{(N)}\mathbb{C}^2$ is the vector space of $N^{th}$ symmetric powers on $\mathbb{C}^2$, so the rank of $\mathbb{S}_{(\epsilon,0)}\mathbb{C}^2$ is $\epsilon + 1$.

\par It remains to calculate rank $\mathbb{S}_{\lambda(0)}\mathbb{C}^m$.  The dimension of a vector space acted on by a Schur functor is given in \cite{fultonandharris}: 
$$ \dim \mathbb{S}_{\lambda} V = \prod_{1 \leq i < j \leq m} \dfrac{\lambda_i - \lambda_j + j - i}{j-i} $$
\\ We also have $\lambda(s) = (\lambda_1, \dots, \lambda_s, (s-2)^{m-2}, \lambda_{s+1} + (m-2),\dots, \lambda_2 + (m-2)) \in \mathbb{Z}_{dom}^m $
(where the exponent $m-2$ denotes that term is repeated $m-2$ times). Since $s=0$, we have: 
\begin{align*} \lambda(0) &= (-2, -2, \dots, -2, \lambda_1 + m - 2, \lambda_2 + m - 2) \\
&= (-2, -2, \dots, -2, -t+\epsilon, -t) 
\end{align*} 
By lemma \ref{Schur func}, $\rank \mathbb{S}_{(-2, -2, \dots, -2, -t+\epsilon, -t)} \mathbb{C}^m = \rank \mathbb{S}_{(t-2, t-2, \dots, t-2, \epsilon, 0)} \mathbb{C}^m $. 

\begin{lem} The dimension of $\mathbb{S}_{(t-2, t-2, \dots, t-2, \epsilon, 0)}\mathbb{C}^m$ is
	$$ \frac{(\epsilon+1)}{m-1}{m+t-3 \choose t-1}{m+t-4-\epsilon \choose m-2 }$$ \end{lem}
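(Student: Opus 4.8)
The plan is to read off $\dim \mathbb{S}_\lambda \mathbb{C}^m$ directly from the Weyl dimension formula recalled above, applied to the partition $\lambda = (\underbrace{t-2,\dots,t-2}_{m-2},\,\epsilon,\,0)$ of length $m$; note this is a genuine partition with nonnegative weakly decreasing parts precisely because $0 \le \epsilon \le t-2$. Writing
$$\dim \mathbb{S}_\lambda \mathbb{C}^m \;=\; \prod_{1 \le i < j \le m} \frac{\lambda_i - \lambda_j + j - i}{j - i},$$
I would split the index pairs $(i,j)$ into four groups according to which of the three ``blocks'' of $\lambda$ the indices $i$ and $j$ land in: the first $m-2$ coordinates (all equal to $t-2$), coordinate $m-1$ (equal to $\epsilon$), and coordinate $m$ (equal to $0$). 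Each group contributes one factor of the claimed product, and I would treat them in turn.

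First, for $1 \le i < j \le m-2$ one has $\lambda_i = \lambda_j$, so each factor is $(j-i)/(j-i) = 1$ and this group contributes nothing. Second, for $1 \le i \le m-2$ and $j = m-1$ the factor is $\bigl((t-2-\epsilon) + (m-1-i)\bigr)/(m-1-i)$; substituting $k = m-1-i$, which runs over $1,\dots,m-2$, turns the product over this group into $\prod_{k=1}^{m-2} \frac{k + (t-2-\epsilon)}{k} = \binom{m+t-4-\epsilon}{m-2}$. Third, for $1 \le i \le m-2$ and $j = m$ the factor is $\bigl((t-2) + (m-i)\bigr)/(m-i)$; substituting $k = m-i$, which runs over $2,\dots,m-1$, gives $\prod_{k=2}^{m-1} \frac{k + (t-2)}{k} = \frac{(m+t-3)!}{(m-1)!\,(t-1)!} = \frac{1}{m-1}\binom{m+t-3}{t-1}$. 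Fourth, the single remaining pair $(m-1,m)$ contributes $(\epsilon - 0 + 1)/1 = \epsilon + 1$. Multiplying the four contributions gives exactly $\frac{\epsilon+1}{m-1}\binom{m+t-3}{t-1}\binom{m+t-4-\epsilon}{m-2}$, as claimed.

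There is no conceptual obstacle here: the proof is pure bookkeeping with the Weyl formula. The only points demanding care are getting the reindexing ranges right and checking the two telescoping identities $\prod_{k=2}^{m-1}(k+t-2) = t(t+1)\cdots(t+m-3) = (m+t-3)!/(t-1)!$ and $\prod_{k=1}^{m-2}(k+t-2-\epsilon) = (t-1-\epsilon)(t-\epsilon)\cdots(m+t-4-\epsilon) = (m+t-4-\epsilon)!/(t-2-\epsilon)!$, together with the standing hypothesis $m > 2$, which is exactly what makes the factor $\tfrac{1}{m-1}$ and all the binomial coefficients well defined. Once these are in place the stated dimension follows immediately.
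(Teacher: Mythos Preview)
Your proof is correct and follows essentially the same approach as the paper: both apply the Weyl dimension formula and split the product over pairs $(i,j)$ according to whether $j \le m-2$, $j = m-1$, or $j = m$, obtaining the same binomial factors. The only cosmetic difference is that you isolate the pair $(m-1,m)$ as its own group, whereas the paper absorbs it into the $j=m$ product to obtain the combined factor $(\epsilon+1)\tfrac{1}{m-1}\binom{m+t-3}{m-2}$; since $\binom{m+t-3}{m-2} = \binom{m+t-3}{t-1}$, the two computations agree.
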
 
\begin{proof} 
	First note that $\lambda_i - \lambda_j = 0 $ for all $j\leq m-2$. The dimension of a Schur functor acting on a space is given by: 
	\begin{equation*} 
	\dim \mathbb{S}_{((t-2)^{m-2},\epsilon,0)} \mathbb{C}^m = \left( \prod_{1\leq i < j \leq m-2 }  \frac{j-i}{j-i} \right) \left( \prod_{1 \leq i < j, j\in \{m-1, m \}  }\frac{\lambda_i - \lambda_j + j - i }{j-i} \right)
	\end{equation*} 
	(see \cite{fultonandharris}), which we can rewrite into
	\begin{align*}
	&\left( \prod_{1 \leq i < j, j\in \{m-1, m \}  }\frac{\lambda_i - \lambda_j + j - i }{j-i} \right) \\ 
	= 	&\left( \prod_{1 \leq i < m-1  }\frac{\lambda_i - \lambda_{m-1} + {m-1} - i }{m-1-i} \right) \cdot 	\left( \prod_{1 \leq i < m }\frac{\lambda_i - \lambda_m + m - i }{m-i} \right) \\ 
	\end{align*} 
	It is a straightforward series of calculations to find that the term of the above product that the first term in the above product, corresponding to $j=m-1$, is equal to 
	$$ { m+t - \epsilon - 4  \choose m-2 } $$
	while the product corresponding to $j=m$ is equal to 
	$$\left( \epsilon + 1 \right){ m+t-3 \choose m-2 }  \frac{1}{(m-1)}. $$

\end{proof} 
Since $\dim(S_{\lambda_1, \lambda_2} \mathbb{C}^2) = \dim(S_{\lambda} \mathbb{C}^2) = \epsilon+1$, 
$$ \dim(S_{\lambda} \mathbb{C}^2 \otimes S_{\lambda} \mathbb{C}^m )  = \dfrac{(\epsilon + 1)^2}{m-1}{m+t-3 \choose m-2 }{m+t-4-\epsilon \choose t-\epsilon - 2 }$$ 

The above calculation was done for a particular $\epsilon$. To find the total rank of the module $\Ext^{2t-3}_{R}(I^{t-1}/I^t, R)$, we must sum up over $\epsilon$: 
\begin{align*}  &\sum\limits_{\epsilon=0}^{t-2} \dfrac{(\epsilon + 1)^2}{m-1}{m+t-3 \choose m-2 }{m+t-4-\epsilon \choose t-\epsilon - 2 }\\ 
&= \dfrac{1}{m-1}{m+t-3 \choose m-2 }\sum\limits_{\epsilon=0}^{t-2} (\epsilon+1)^2 {m+t-4-\epsilon \choose t-\epsilon - 2 } 
\end{align*} 

\begin{lem} The following is a combinatorial identity:
	$$\sum\limits_{\epsilon=1}^{b-a} \epsilon^2 {b-\epsilon \choose a} = {b+2 \choose a+3 } + {b+1 \choose a+3}$$
\end{lem}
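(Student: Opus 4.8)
The plan is to reduce the identity to the classical ``upper-index'' Vandermonde convolution
$$\sum_{k=0}^{n}\binom{k}{c}\binom{n-k}{d}=\binom{n+1}{c+d+1},$$
valid for all integers $c,d\ge 0$ and $n\ge 0$. First I would expand the weight $\epsilon^2$ in the binomial basis, using $\epsilon^2 = 2\binom{\epsilon}{2}+\binom{\epsilon}{1}$, so that
$$\sum_{\epsilon=1}^{b-a}\epsilon^2\binom{b-\epsilon}{a}
= 2\sum_{\epsilon=1}^{b-a}\binom{\epsilon}{2}\binom{b-\epsilon}{a}
 + \sum_{\epsilon=1}^{b-a}\binom{\epsilon}{1}\binom{b-\epsilon}{a}.$$

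Next I would observe that in each sum the range may be harmlessly extended to $0\le\epsilon\le b$: the factor $\binom{\epsilon}{2}$ (resp.\ $\binom{\epsilon}{1}$) vanishes for $\epsilon<2$ (resp.\ $\epsilon<1$), while $\binom{b-\epsilon}{a}$ vanishes as soon as $b-\epsilon<a$, i.e.\ $\epsilon>b-a$. Applying the convolution identity with $n=b$ to each of the two resulting sums (with $(c,d)=(2,a)$ and $(c,d)=(1,a)$ respectively) yields $2\binom{b+1}{a+3}+\binom{b+1}{a+2}$. Finally I would convert this to the stated right-hand side by a single application of Pascal's rule, $\binom{b+2}{a+3}=\binom{b+1}{a+3}+\binom{b+1}{a+2}$, which gives $\binom{b+2}{a+3}+\binom{b+1}{a+3}=2\binom{b+1}{a+3}+\binom{b+1}{a+2}$, matching the expression just obtained.

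There is no serious obstacle here; the only points that need a little care are the justification that the summation limits may be relaxed so the convolution identity applies verbatim, and a correctly stated (upper-index) form of Vandermonde. As an alternative route that sidesteps both, one may argue by generating functions: since $\sum_{j\ge 0}\binom{j}{a}x^j = x^a/(1-x)^{a+1}$ and $\sum_{\epsilon\ge 0}\epsilon^2 x^\epsilon = x(1+x)/(1-x)^3$, the sum in question equals $[x^b]\,\dfrac{x^{a+1}(1+x)}{(1-x)^{a+4}} = [x^{b-a-1}]\dfrac{1+x}{(1-x)^{a+4}}$, and extracting this coefficient gives $\binom{b+2}{a+3}+\binom{b+1}{a+3}$ directly. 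Either route also makes transparent the implicit hypothesis $a\ge 0$, and shows the degenerate case $b\le a$ is consistent, both sides being $0$.
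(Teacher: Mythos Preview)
Your argument is correct. Both the Vandermonde-convolution route and the generating-function alternative are clean and complete; the extension of the summation range is properly justified, and the final Pascal step is right.

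The paper's own proof is only sketched: it says the details are omitted and that one uses repeatedly the hockey-stick identity $\binom{n+1}{r+1}=\sum_{j=r}^{n}\binom{j}{r}$. That approach presumably builds up from $\sum_\epsilon \binom{b-\epsilon}{a}$ to $\sum_\epsilon \epsilon\binom{b-\epsilon}{a}$ to $\sum_\epsilon \epsilon^2\binom{b-\epsilon}{a}$ by iterated partial summation, each step invoking the hockey-stick identity. Your method is genuinely different and more economical: by writing $\epsilon^2=2\binom{\epsilon}{2}+\binom{\epsilon}{1}$ you hit the Vandermonde convolution $\sum_{k}\binom{k}{c}\binom{n-k}{d}=\binom{n+1}{c+d+1}$ directly, collapsing what would be several hockey-stick applications into a single stroke (the hockey-stick identity is just the $c=0$ special case of this convolution). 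The generating-function variant you offer is equally tidy and has the minor bonus of making the edge case $b\le a$ transparent without a separate check.
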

\begin{proof} The full detail of the proof has been omitted. It uses multiple times the fact that ${n+1 \choose r+1} = \sum_{j=r}^{n} {j \choose r}$

\end{proof} 
Using $b=(m+t-3)$, $a=m-2$, and replacing $\epsilon$ in the above lemma with $\epsilon + 1$,  it follows that $\Ext_R^{2m-3}(I^t/I^{t-1},S)$ has rank:  
$$ \dfrac{1}{m-1}{m+t-3 \choose m-2 }\left( {m+t-1 \choose m+1 }+{m+t -2 \choose m+1 } \right) $$ 
By a straightforward argument on long exact sequences of $Ext$ modules, $\Ext^{2m-3}_R(R/I^T,R)$ has rank 
\begin{align*} 
\ell \left( \Ext^{2m-3}_R(R/I^T,R) \right)&= \sum \ell \left(  \Ext^{2m-3}_R (I^{t-1}/I^t, R) \right) \\ 
 &= \sum\limits_{t= 1}^{T} \dfrac{1}{m-1}{m+t-3 \choose m-2 }\left( {m+t-1 \choose m+1 }+{m+t -2 \choose m+1 } \right) \\ 
&= \frac{1}{m-1} \sum\limits_{t= 2}^{T} {m+t-3 \choose m-2 }\left( {m+t-1 \choose m+1 }+{m+t -2 \choose m+1 } \right)
\end{align*}
\begin{lem} 
	\begin{multline*} 
	\frac{1}{m-1} \sum\limits_{t= 2}^{T} {m+t-3 \choose m-2 }\left( {m+t-1 \choose m+1 }+{m+t -2 \choose m+1 } \right) = \\  \frac{1}{m+1}{m+T-2 \choose m} {m+T -1 \choose m} 
	\end{multline*}  
\end{lem}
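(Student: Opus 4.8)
The plan is to prove the identity by induction on $T$. The base case $T = 2$ is immediate: the left-hand side is
\[ \frac{1}{m-1}\binom{m-1}{m-2}\left(\binom{m+1}{m+1} + \binom{m}{m+1}\right) = \frac{1}{m-1}\,(m-1)\,(1 + 0) = 1, \]
and the right-hand side is $\frac{1}{m+1}\binom{m}{m}\binom{m+1}{m} = \frac{m+1}{m+1} = 1$.

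For the inductive step, write $f(T) = \frac{1}{m+1}\binom{m+T-2}{m}\binom{m+T-1}{m}$ for the conjectured value of the sum and $a_T = \frac{1}{m-1}\binom{m+T-3}{m-2}\left(\binom{m+T-1}{m+1} + \binom{m+T-2}{m+1}\right)$ for the summand, so that it suffices to check $f(T) - f(T-1) = a_T$ for every $T \ge 3$. First I would factor $\binom{m+T-2}{m}$ out of $f(T) - f(T-1)$ and apply Pascal's rule in the telescoped form
\[ \binom{m+T-1}{m} - \binom{m+T-3}{m} = \binom{m+T-2}{m-1} + \binom{m+T-3}{m-1}, \]
which rewrites the difference as $\frac{1}{m+1}\binom{m+T-2}{m}\left(\binom{m+T-2}{m-1} + \binom{m+T-3}{m-1}\right)$. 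Then, writing $s = m + T$ to keep the bookkeeping light, I would clear every binomial coefficient to factorials; combining the two terms inside each bracket produces the common factor $2s - m - 3$, and both $a_T$ and $f(T) - f(T-1)$ collapse to
\[ \frac{(s-2)!\,(s-3)!\,(2s-m-3)}{(m-1)!\,(m+1)!\,(s-m-1)\,\bigl[(s-m-2)!\bigr]^2}. \]
Since the two sides agree, the induction closes.

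The only real work is this factorial manipulation, and it is entirely routine; there is no conceptual obstacle, just careful bookkeeping with Pascal's rule. An alternative I would mention but not pursue is to split the sum as $\sum_t \binom{m+t-3}{m-2}\binom{m+t-1}{m+1} + \sum_t \binom{m+t-3}{m-2}\binom{m+t-2}{m+1}$ and evaluate each piece by a Chu--Vandermonde (hockey-stick) summation after the reindexing $j = t - 2$; this is also elementary, but the index shifts make it no cleaner than the direct induction. Either route yields the stated closed form, and hence the value of $\ell\bigl(H^3_{\mf{m}}(R/I^t)\bigr)$ asserted in the main theorem.
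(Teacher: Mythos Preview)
Your proof is correct and follows exactly the approach the paper indicates: the paper's own proof consists only of the sentence ``The proof is a standard induction argument,'' together with a remark that it can be checked by computer algebra, so your write-up is simply a fleshed-out version of that same induction, and the factorial computation you outline does indeed give the common value $\dfrac{(s-2)!\,(s-3)!\,(2s-m-3)}{(m-1)!\,(m+1)!\,(s-m-1)\,[(s-m-2)!]^{2}}$ on both sides.
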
 
\begin{proof}
	The proof is a standard induction argument. This equality is also verifiable using a computing software, such as Maple. \end{proof} 
We have just shown the following theorem. 
\myThm*

In \cite{DaoAndMontano}, they define 
$$ \epsilon^j = \frac{\ell(H^j_{\mf{m}}(R/I^t))}{n^{d}}. $$
This invariant is a generalization of $j$-multiplicity
$$ \epsilon = \frac{\ell (H^0_{\mf{m}}(R/I^t))} {n^{d}} $$ 
which, in turn, is a generalization of Hilbert-Samuel multiplicity, 
$$ e = (d)! \frac{\ell (H^0_{\mf{m}}(R/I^t))} {n^{d}} \text{ when } I \text{ is } \mf{m}\text{-primary.}$$
The presence of $d!$ in Hilbert-Samuel multiplicity indicates that perhaps the invariant $d! \epsilon^j$, a constant multiple of the one in \cite{DaoAndMontano}, may also be an enlightening one. 
\myCor*

	\pagebreak 

\bibliographystyle{alpha} 

\bibliography{main}

\newcommand{\etalchar}[1]{$^{#1}$}
\begin{thebibliography}{BBL{\etalchar{+}}19}

\bibitem[BBL{\etalchar{+}}19]{BBLSZ}
B.~Bhatt, M.~Blickle, G.~Lyubeznik, A.~K. Singh, and W.~Zhang.
\newblock Stabilization of the cohomology of thickenings.
\newblock {\em Amer. J. Math.}, 141(2):531--561, 2019.

\bibitem[BH93]{brunsAndHerzog}
W.~Bruns and J.~Herzog.
\newblock {\em Cohen-{M}acaulay rings}, volume~39 of {\em Cambridge Stud. Adv.
  Math.}
\newblock Cambridge University Press, Cambridge, 1993.

\bibitem[Bru88]{DetRings}
W.~Bruns.
\newblock {\em Determinantal rings}.
\newblock Lecture notes in mathematics (Springer-Verlag) ; no. 1327. 1988.

\bibitem[DEP80]{DEP}
C.~DeConcini, D.~Eisenbud, and C.~Procesi.
\newblock Young diagrams and determinantal varieties.
\newblock {\em Invent. Math.}, 56(2):129--165, 1980.

\bibitem[DMn19]{DaoAndMontano}
H.~Dao and J.~Monta\~{n}o.
\newblock Length of local cohomology of powers of ideals.
\newblock {\em Trans. Amer. Math. Soc.}, 371(5):3483--3503, 2019.

\bibitem[EN62]{EagonNorthcott}
J.~A. Eagon and D.~G. Northcott.
\newblock Ideals defined by matrices and a certain complex associated with
  them.
\newblock {\em Proc. Roy. Soc. London Ser. A}, 269:188--204, 1962.

\bibitem[FH91]{fultonandharris}
W.~Fulton and J.~Harris.
\newblock {\em Representation theory}, volume 129 of {\em Grad. Texts in Math.}
\newblock Springer-Verlag, New York, 1991.
\newblock A first course, Readings in Mathematics.

\bibitem[HE71]{HochsterAndEagon}
M.~Hochster and J.~A. Eagon.
\newblock Cohen-macaulay rings, invariant theory, and the generic perfection of
  determinantal loci.
\newblock {\em Amer. J. Math.}, 93(4):1020--1058, 1971.

\bibitem[KV10]{Katz}
D.~Katz and J.~Validashti.
\newblock Multiplicities and rees valuations.
\newblock {\em Collect. Math.ooooci ,.}, 61(1):1--24, 2010.

\bibitem[Rai17]{Raicu17}
C.~Raicu.
\newblock Homological invariants of determinantal thickenings.
\newblock {\em Bull. Math. Soc. Sci. Math. Roumanie (N.S.)},
  60(108)(4):425--446, 2017.

\bibitem[Rai18]{Raicu}
C.~Raicu.
\newblock Regularity and cohomology of determinantal thickenings.
\newblock {\em Proc. Lond. Math. Soc.}, 116(2):248--280, 2018.

\bibitem[RW14]{RW14}
C.~Raicu and J.~Weyman.
\newblock Local cohomology with support in generic determinantal ideals.
\newblock {\em Algebra Number Theory}, 8(5):1231--1257, 2014.

\bibitem[RWW14]{RWW}
C.~Raicu, J.~Weyman, and E.~E. Witt.
\newblock Local cohomology with support in ideals of maximal minors and
  sub-maximal pfaffians.
\newblock {\em Adv. Math.}, 250:596--610, 2014.

\bibitem[Wit12]{Witt}
E.~E. Witt.
\newblock Local cohomology with support in ideals of maximal minors.
\newblock {\em Adv. Math.}, 231(3-4):1998--2012, 2012.

\end{thebibliography}

\end{document}